\theoremstyle{plain}
\newtheorem{thm}{Theorem}[section]
\newtheorem{prop}{Proposition}[section]
\newtheorem{lem}[prop]{Lemma}
\newtheorem{defi}[prop]{Definition}
\newtheorem{rmk}[prop]{Remark}
\newtheorem*{proposition*}{Proposition}
\numberwithin{equation}{section}
\newcommand {\p} {\partial}
\def\div{\text{div}}
\title[Inverse boundary problem for Mean Field Games]{Inverse boundary problem for a mean field game system with probability density constraint}
\author[H. Liu]{Hongyu Liu}
\address{Department of Mathematics, City University of Hong Kong, Kowloon, Hong Kong SAR, China}
\email{hongyu.liuip@gmail.com, hongyliu@cityu.edu.hk}
\author[S. Zhang]{Shen Zhang}
\address{Department of Mathematics, City University of Hong Kong, Kowloon, Hong Kong SAR, China}
\email{szhang347-c@my.cityu.edu.hk}
\begin{document}
	\maketitle
	
	\begin{abstract}
	By following the study in \cite{MFG2}, we consider an inverse boundary problem for the mean field game system where a probability density constraint is enforced on the game agents. That is, we consider the case that reflective boundary conditions are enforced and hence the population distribution of the game agents should be treated as a probability measure which preserves both positivity and the total population. This poses significant challenges for the corresponding inverse problems in constructing suitable ``probing modes" which should fulfil such a probability density constraint. We develop an effective scheme in tackling such a case which is new to the literature.

		%		The theory of mean field games (MFGs) studies the limiting behaviors of large systems where the agents interact with each other in a certain symmetric way. So far, most of the literature focus on the  periodic solutions (space variable belongs to flat torus) or the solutions in the whole space. But in many economic and financial applications,  the MFGs system is usually defined in a certain domain $\Omega$. 
		%		
		%		The running cost is one of the key parameters (functions) in this system. However,
		%	    in practice the running cost is often  unknown for the agents. In this paper, we propose and study an inverse problem depends on the measurement on the boundary of this parabolic systems. We show that under general conditions, if the running cost belongs to an analytic class, the recovery result can be established through a deep analysis of the linearized system.
	\end{abstract}

	%\tableofcontents
	
	\section{Introduction}
	
	\subsection{Problem setup and background}
	
	%Mean Field Games (abbreviated as MFGs in what follows) are differential games involving non-atomic players. 
	
	We are concerned with the inverse problems for mean field games (MFGs), which have received significant interest in the literature recently \cite{N1,N2,s9,ILY,s5,Kl23,Kl23-1,KlAv,KLL1,KLL2,s7,LMZ,MFG2,s1,RSK}. We also refer to \cite{model,CarPor,17,HCM06,HCM071,HCM072,HCM073,LL06a,LL06b,LL07a,Lions} for related background on the theory of MFGs. 
	
	By following \cite{MFG2}, we consider the following MFG system in our study: 
	\begin{equation}\label{main} 
		\left\{
		\begin{array}{ll}
			\displaystyle{-\partial_t u(x,t) -\Delta u(t,x)+\frac{1}{2}|\nabla u(x,t)|^2-F(x, m(x,t))=0} &  {\rm{in}}\ Q,\medskip\\
			\displaystyle{\partial_tm(x,t)-\Delta m(x,t)-{\rm div} \big(m(x,t) \nabla u(x,t)\big)=0} & {\rm{in}}\ Q,\medskip\\
			\p_{\nu} u(x,t)=\p_{\nu} m(x,t)=0 & {\rm{on}}\ \Sigma,\medskip\\
			u(x,T)=G,\ m(x,0)=m_0(x) & {\rm{in}}\  \Omega,\medskip
		\end{array}
		\right.
	\end{equation}
where $\Omega\subset\mathbb{R}^n$, $n\in\mathbb{N}$ is a bounded Lipschitz domain, which signifies the state space. Let $\mathcal{P}$ stand for the set of Borel probability measures on $\mathbb{R}^n$, and  $\mathcal{P}(\Omega)$ stand for the set of Borel probability measures on $\Omega$. One has that $m\in\mathcal{P}(\Omega)$ denotes the population distribution of the agents and $u(x, t):\Omega\times [0, T]\mapsto \mathbb{R}$ denotes the value function of each player. Here, $T\in\mathbb{R}_+$ signifies the terminal time in what follows. Moreover, $\Sigma:=\p\Omega\times[0,T]$ , $Q:=\overline{\Omega}\times[0,T]$ and $\nu$ is the exterior unit normal to $\partial\Omega$. In \eqref{main}, $F:\Omega\times\mathcal{P}(\Omega)\mapsto\mathbb{R} $ is the running cost function which signifies the interaction between the agents and the population; $m_0$ is the initial population distribution and $G$ is a constant and it signifies the terminal cost.

%	The MFG system \eqref{main} is also known as the first-order master equation which corresponds to the case that the volatility of the common noise among small players is vanishing \cite{CCP}. The master equation can be understood as an optimal nonlinear transport in the space of probability measures. The single player's value function $u$ satisfies the Hamilton-Jacobi-Bellman (HBJ) equation, namely the first equation in \eqref{main}, where the Hamiltonian is the canonical quadratic form $|\nabla u|^2/2$. The distribution law of the population $m$ is described by the Kolmogorov-Fokker-Planck (KFP) equation, namely the second equation in \eqref{main}.  
	
	When the  players have to control a process in a bounded domain $\Omega\subset\mathbb{R}^n$ with reflexion on the boundary of $\Omega$, it is constrained with the  Neumann boundary conditions. Many economic and financial models are described by this system; see for instance, the models in \cite{model}. Due to this fact, we can define
	\begin{equation}\label{eq:distr1}
		\mathcal{O}:=\{ m:\Omega\to [0,\infty) \ \ |\ \ \int_{\Omega} m\, dx =1 \}.
	\end{equation}
	If $m\in \mathcal{O}$, then it is the density of a distribution in $\Omega$. It can directly verified from \eqref{main} that if the initial distribution $m_0\in\mathcal{O}$, then $m(\cdot; t)\in\mathcal{O}$ for any subsequent time $t$. It is clear that by scaling, the total population $1$ in \eqref{eq:distr1} can be replaced by any positive number. Hence, it is required that $m$ preserves the positivity, namely $m\geq 0$, as well as the number of the total population, namely $\int_\Omega m=1$, for all the time $t\in [0, T]$. This is referred to the \emph{probability density constraint} in our study. That is, the MFG system \eqref{main} is enforced with such a probability density constraint.

We next introduce the measurement/observation dataset for our inverse problem study.
	We define 
	\begin{equation}\label{eq:meop0}
		\mathcal{N}_F(m_0):=\Big(u(x,0),m(x,T)\Big), 
	\end{equation}
	where
	$(u, m)$ is the (unique) pair of solutions to the MFG system \eqref{main} associated with the initial population distribution $m(x, 0)=m_0(x)$.  The inverse problem that we aim to investigate can be formulated as follows:
	\begin{equation}\label{eq:ip1}
		\mathcal{N}_{F}(m_0)\rightarrow F\quad \mbox{for all}\ \ m_0\in\mathcal{H}\subset\mathcal{O}, 
	\end{equation}
	where $\mathcal{H}$ is a proper subset and it shall be described in more details in what follows.

	As discussed earlier, $m$ must fulfil the probability density constraint \eqref{eq:distr1}. This poses significant challenges for the inverse problem study. In this paper, we develop a new strategy that can effectively deal with such a challenging issue.

	The rest of the paper is organized as follows. In Section 2, we fix some notations and introduce several auxiliary results as well as state the main result of the inverse problem. Section 3 is devoted to the study of the forward problem, and Sections~4 and 5 are devoted to the proof of the main result. 
	
	%	
	%
	%\vspace*{3cm}	
	%	
	% 
	% 
	%The article is divided as follows. In section 2, we define some notations we used in this article and claim our main results. In section 3, we study the local well-posedness of system $\eqref{main}$ and show the smooth property of solution map. In the rest of this paper (sections 4 and 5), we give the proof of our main result. The analysis of the linearized system and the construction of special solutions of linearized system is in section 4.
	
	\section{Preliminaries and statement of main results}
	
	We adopt the notations in \cite{MFG2}, and mainly work in H\"older spaces $C^{k+\alpha}(\Omega)$, $C^{k+\alpha,\frac{k+\alpha}{2}}(Q) $ for $k\in\mathbb{N}$ and $0<\alpha<1$.  
	
	Next, we introduce the admissibility condition on $F$ , which shall be mainly needed in our study of the inverse problem. 
	
	\begin{defi}\label{Admissible class2}
		We say $U(x,z):\mathbb{R}^n\times\mathbb{C}\to\mathbb{C}$ is admissible, denoted by $U\in\mathcal{A}$, if it satisfies the following conditions:
		\begin{enumerate}
			\item[(i)] The map $z\mapsto U(x,z)$ is holomorphic with the value in $C^{2+\alpha}(\Omega)$ for $0<\alpha<1$.
			\item[(ii)] $U(x,1)=0$ for all $x\in\mathbb{R}^n$. Here we recall that we assume $|\Omega|=1.$
			\item[(iii)]  $U^{(1)}$ is a positive real number. 
		\end{enumerate} 	
		
		Clearly, if (1) and (2) are fulfilled, then $U$ can be expanded into a power series as follows:
		\begin{equation}\label{eq:G}
			U(x,z)=\sum_{k=1}^{\infty} U^{(k)}(x)\frac{(z-1)^k}{k!},
		\end{equation}
		where $ U^{(k)}(x)=\frac{\p^kU}{\p z^k}(1).$
	\end{defi}
For more discussion for this admissible condition, see \cite{MFG2}.

\begin{rmk}
	Note that if $F(x,1)=0$ , then
	$(u,m)=(G,1)$ is a solution of the MFG system \eqref{main}. In this case, the initial distribution $m(x, 0)=1$. This is a common nature of MFG system that the uniform distribution is a stable state.
\end{rmk}

\subsection{Main unique identifiability result}

We are in a position to state the main result.

%	Recall that we define $\mathcal{O}:=\{ m:\Omega\to [0,\infty) \quad |\quad \int_{\Omega} m dx =1 \}.$
\begin{thm}\label{der F}
 Assume that $F_j \in\mathcal{A}$ ($j=1,2$) . Let $\mathcal{N}_{F_j}$ be the measurement map associated to
	the following system:
	\begin{equation}\label{eq:mfg1}
		\begin{cases}
			-\p_tu(x,t)-\Delta u(x,t)+\frac 1 2 {|\nabla u(x,t)|^2}= F_j(x,m(x,t)),& \text{ in }  Q,\medskip\\
			\p_t m(x,t)-\Delta m(x,t)-{\rm div} (m(x,t) \nabla u(x,t))=0,&\text{ in } Q,\medskip\\
			\p_{\nu} u(x,t)=\p_{\nu}m(x,t)=0      &\text{ on } \Sigma,\medskip\\
			u(x,T)=G, & \text{ in } \Omega,\medskip\\
			m(x,0)=m_0(x), & \text{ in } \Omega.\\
		\end{cases}  		
	\end{equation}
     If 	
	$$\mathcal{N}_{F_1}(m_0)=\mathcal{N}_{F_2}(m_0),$$  
	for all  $m_0\in  C^{2+\alpha}(\Omega) \cap \mathcal{O}$, where $\mathcal{O}$ is defined in \eqref{eq:distr1},
	  then it holds that 
	$$F_1(x,z)=F_2(x,z)\ \text{  in  } \mathbb{R}.$$ 
\end{thm}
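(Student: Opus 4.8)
The plan is to recover the running cost through its Taylor coefficients $F^{(k)}(x)$ in the expansion \eqref{eq:G} by a successive (higher-order) linearization of \eqref{eq:mfg1} around the stable background state. Since $F_j(x,1)=0$, the constant pair $(u,m)=(G,1)$ solves \eqref{eq:mfg1} and is the state about which I linearize. Respecting the probability density constraint \eqref{eq:distr1}, the only admissible probing data are $m_0^{\boldsymbol{\epsilon}}=1+\sum_{i=1}^{N}\epsilon_i f_i$ with the \emph{mean-zero} restriction $\int_\Omega f_i\,dx=0$ (forced by $\int_\Omega m_0^{\boldsymbol{\epsilon}}\,dx=1$) and $\epsilon_i$ small enough that $m_0^{\boldsymbol{\epsilon}}\ge 0$, so that $m_0^{\boldsymbol{\epsilon}}\in C^{2+\alpha}(\Omega)\cap\mathcal{O}$. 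Writing $(u^{\boldsymbol{\epsilon}},m^{\boldsymbol{\epsilon}})$ for the corresponding solution, the first-order linearizations $u^{(1)}_i=\partial_{\epsilon_i}u^{\boldsymbol{\epsilon}}|_{\boldsymbol{\epsilon}=0}$, $m^{(1)}_i=\partial_{\epsilon_i}m^{\boldsymbol{\epsilon}}|_{\boldsymbol{\epsilon}=0}$ solve
\begin{equation*}
\begin{cases}
-\partial_t u^{(1)}_i-\Delta u^{(1)}_i=F^{(1)}(x)\,m^{(1)}_i, & \text{in }Q,\\
\partial_t m^{(1)}_i-\Delta m^{(1)}_i-\Delta u^{(1)}_i=0, & \text{in }Q,\\
\partial_\nu u^{(1)}_i=\partial_\nu m^{(1)}_i=0, & \text{on }\Sigma,\\
u^{(1)}_i(x,T)=0,\ \ m^{(1)}_i(x,0)=f_i(x), & \text{in }\Omega,
\end{cases}
\end{equation*}
the quadratic gradient term dropping out because $\nabla u=\nabla G=0$ at the background. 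Integrating the $m$-equation over $\Omega$ and using the Neumann condition shows $\int_\Omega m^{(1)}_i(\cdot,t)\,dx=0$ for every $t$, i.e.\ the mean-zero constraint is propagated by the linearized flow.

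The engine of the argument is an adjoint integral identity. For $N=1$, the hypothesis $\mathcal{N}_{F_1}=\mathcal{N}_{F_2}$ gives $u^{(1)}_1(\cdot,0)=u^{(1)}_2(\cdot,0)$ and $m^{(1)}_1(\cdot,T)=m^{(1)}_2(\cdot,T)$, so the difference of the two linearized solutions has vanishing Cauchy data at both $t=0$ and $t=T$. I would then pair this difference with a solution $(\phi,\psi)$ of the adjoint system
\begin{equation*}
\begin{cases}
\partial_t\psi-\Delta\psi-\Delta\phi=0, & \text{in }Q,\\
-\partial_t\phi-\Delta\phi-F^{(1)}_1(x)\,\psi=0, & \text{in }Q,\\
\partial_\nu\phi=\partial_\nu\psi=0, & \text{on }\Sigma,
\end{cases}
\end{equation*}
and integrate by parts in $x$ and $t$. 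The Neumann conditions and the vanishing Cauchy data annihilate all boundary contributions, leaving
\begin{equation*}
\int_\Omega\big(F^{(1)}_1(x)-F^{(1)}_2(x)\big)\Big(\int_0^T m^{(1)}(x,t)\,\psi(x,t)\,dt\Big)\,dx=0,
\end{equation*}
where $m^{(1)}$ is the linearization associated with $F_2$; the time-independence of $F^{(1)}_1-F^{(1)}_2$ has been used to pull it out of the $t$-integral.

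Next I would iterate to the higher coefficients. Assuming inductively that $F^{(k)}_1=F^{(k)}_2$ for all $k<N$, the mixed derivative $\partial_{\epsilon_1}\cdots\partial_{\epsilon_N}|_{\boldsymbol{\epsilon}=0}$ produces an $N$-th order equation whose source is $F^{(N)}(x)\prod_{i=1}^N m^{(1)}_i+R_N$, with $R_N$ collecting the gradient interactions and the already-identified lower-order coefficients; by the induction hypothesis $R_N$ is common to $F_1$ and $F_2$ and cancels in the difference. The same adjoint pairing then yields
\begin{equation*}
\int_\Omega\big(F^{(N)}_1(x)-F^{(N)}_2(x)\big)\Big(\int_0^T\textstyle\prod_{i=1}^N m^{(1)}_i(x,t)\,\psi(x,t)\,dt\Big)\,dx=0.
\end{equation*}
To close the induction I must show that, as $f_1,\dots,f_N$ range over admissible mean-zero data and $(\phi,\psi)$ over adjoint solutions, the functions $x\mapsto\int_0^T\prod_i m^{(1)}_i\,\psi\,dt$ are complete in $L^2(\Omega)$; this forces $F^{(N)}_1=F^{(N)}_2$, and since $z\mapsto F_j(x,z)$ is holomorphic, matching all coefficients gives $F_1\equiv F_2$ as claimed. (The base case $N=1$ is easiest when $F^{(1)}$ is the positive constant of Definition~\ref{Admissible class2}(iii), where a single non-degenerate product suffices.)

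The hard part is precisely this completeness step under the probability density constraint, and it is where the new scheme promised in the abstract enters. The probing families normally used for Neumann parabolic inverse problems — solutions behaving like point sources, or exponentially growing Calder\'on-type solutions — carry nonzero mean and are therefore \emph{inadmissible} here. I expect the crux to be the construction of mean-zero profiles $f_i$ whose evolutions $m^{(1)}_i$, paired with a concentrated adjoint solution $\psi$, make the time-integrated product $\int_0^T\prod_i m^{(1)}_i\,\psi\,dt$ localize near an arbitrary prescribed point $x_0\in\Omega$. The point is that the compensating mass needed to enforce $\int_\Omega f_i=0$ can be parked far from $x_0$, where $\psi$ is negligible, so that only the local value $F^{(N)}(x_0)$ is sampled; combined with the continuity of $F^{(N)}_1-F^{(N)}_2$, pointwise vanishing then gives the claim. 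Quantifying this localization via short-time heat-kernel asymptotics, while keeping every profile mean-zero, is the genuinely new and delicate ingredient.
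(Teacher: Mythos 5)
Your overall architecture---linearize around the uniform state $(G,1)$ with mean-zero perturbations $f_i$, derive an adjoint integral identity from the equality of the measurement maps, and recover the Taylor coefficients $F^{(k)}$ inductively---is exactly the paper's strategy (Section 4, Lemma \ref{key}, and Steps I--III of the proof of Theorem \ref{der F}). The first-order step, where you exploit that $F^{(1)}$ is a positive constant by Definition \ref{Admissible class2}(iii) so that a single non-degenerate pairing suffices, also matches. However, there is a genuine gap at the point you yourself flag as ``the hard part'': you never actually construct the admissible probing modes, nor prove the completeness of the family $x\mapsto\int_0^T\prod_i m^{(1)}_i\,\psi\,dt$ in $L^2(\Omega)$. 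Your sketched mechanism (mean-zero profiles whose compensating mass is ``parked'' far from a point $x_0$, combined with short-time heat-kernel localization of a concentrated adjoint solution) is not carried out, and it faces a concrete obstacle: $m^{(1)}_i$ does not evolve by the heat semigroup alone but is coupled to $u^{(1)}_i$ through the source $\Delta u^{(1)}_i$, which in turn is driven globally by $F^{(1)}m^{(1)}_i$ backward in time, so heat-kernel asymptotics for $m^{(1)}_i$ do not directly control the time-integrated product of $N$ such evolutions against $\psi$. As written, the proof does not close.

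The paper resolves this step quite differently and more explicitly (Theorems \ref{con_of_pb} and \ref{con_of_pb2}): it takes $\overline{m}_i$ to be Neumann eigenfunctions of $-\Delta$ with positive eigenvalues $\beta_i$---which are automatically mean-zero, so the probability density constraint is satisfied for free---and builds exact separated-variable solutions of the linearized system and its adjoint of the form $m=-\lambda_i e^{-\lambda_i t}\overline{m}_i(x)+D_i e^{\lambda_i t}\overline{m}_i(x)$ with $\lambda_i=\sqrt{\beta_i^2+c\beta_i}$, chosen so that the terminal condition $u(x,T)=0$ is met. Inserting these into the adjoint identity, the time factor integrates to a positive constant; at first order the constancy of $F^{(1)}$ finishes the argument, and at order $k\ge 2$ one obtains $\int_\Omega(F^{(k)}_1-F^{(k)}_2)\,\overline{m}_j^{\,k}\,\overline{m}_i\,dx=0$ for all $i$, whence $(F^{(k)}_1-F^{(k)}_2)\overline{m}_j^{\,k}=0$ by completeness of the Neumann eigenbasis, and $F^{(k)}_1=F^{(k)}_2$ because the nodal set of an eigenfunction has measure zero. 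If you want to complete your proof, replacing the localization heuristic with this eigenfunction construction (or supplying a full proof of your density claim) is the missing ingredient.
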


%Finaly, we extend our results to the system with general Lagrangians in section 6. 

\section{Auxiliary results on the forward problem}\label{section wp}

In this section, we derive several auxiliary results on the forward problem of the MFG system \eqref{main}. One of the key results is the infinite differentiability of the system with respect to small variations around (the density of) a uniform
distribution ($m_0(x)$). We list the results we use here and one may refer to \cite{MFG2}

\begin{lem}\label{linear app unique}
	Assume that   $F^{(1)}\in C^{\alpha}(\Omega)$.
	For  any  $g,\tilde g\in C^{2+\alpha}(\overline\Omega)$,  
	and  $h,\tilde{h}\in C^{\alpha,\alpha/2}(\overline Q)$ with  the   compatibility conditions:
	\begin{align}\label{c-systems}
		\p_{\nu}\tilde g(x)= 
		\p_{\nu} g(x)=0,
	\end{align}
	the following system
	\begin{equation}\label{surjective}
		\begin{cases}
			-u_t-\Delta u-F^{(1)}m=h & \text{ in } Q,\medskip\\
			m_t-\Delta m-\Delta u=\tilde{h}  & \text{ in } Q,\medskip\\
			\p_{\nu}u(x,t)=\p_{\nu}m(x,t)=0     & \text{ on } \Sigma,\medskip\\
		u(x,T)=g    & \text{ in } \Omega,\medskip\\
			m(x,0)=\tilde{g} & \text{ in } \Omega.\\
		\end{cases}
	\end{equation}
	admits a pair of solutions $(u,m)\in [ C^{2+\alpha,1+\alpha/2}(\overline Q)]^2$. 
\end{lem}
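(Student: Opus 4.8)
\emph{Overall strategy and reduction.} I regard \eqref{surjective} as a linear \emph{forward--backward} parabolic system: the $u$-equation is backward (terminal datum $u(\cdot,T)=g$), the $m$-equation is forward (initial datum $m(\cdot,0)=\tilde g$), and the two are coupled through the zeroth-order term $F^{(1)}m$ in the first line and the second-order term $\Delta u$ in the second. The plan is to decouple the system into two \emph{scalar} Neumann parabolic problems, solve each by the standard parabolic Schauder theory, and recover a solution of the full system as a fixed point of the resulting solution operator, using a Lasry--Lions duality identity to secure solvability. As a first step I would remove the inhomogeneous end data: using the compatibility \eqref{c-systems}, let $\hat u$ solve $-\hat u_t-\Delta\hat u=0$ in $Q$ with $\p_\nu\hat u=0$ on $\Sigma$, $\hat u(\cdot,T)=g$, and let $\hat m$ solve $\hat m_t-\Delta\hat m=0$ with $\p_\nu\hat m=0$, $\hat m(\cdot,0)=\tilde g$; both lie in $C^{2+\alpha,1+\alpha/2}(\overline Q)$ by scalar Schauder estimates. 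Setting $v=u-\hat u$, $w=m-\hat m$ turns \eqref{surjective} into the same system for $(v,w)$ with \emph{vanishing} terminal/initial data and modified right-hand sides $h+F^{(1)}\hat m\in C^{\alpha,\alpha/2}$ and $\tilde h+\Delta\hat u\in C^{\alpha,\alpha/2}$, so it suffices to treat the case $g=\tilde g=0$.

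\emph{The decoupling map and its compactness.} Given $m\in C^{2+\alpha,1+\alpha/2}(\overline Q)$, the source $h+F^{(1)}m$ lies in $C^{\alpha,\alpha/2}$ since $F^{(1)}\in C^{\alpha}(\Omega)$; solving the backward Neumann problem $-u_t-\Delta u=h+F^{(1)}m$, $u(\cdot,T)=0$, gives a unique $u\in C^{2+\alpha,1+\alpha/2}$, whence $\Delta u\in C^{\alpha,\alpha/2}$, and solving the forward Neumann problem $m'_t-\Delta m'=\tilde h+\Delta u$, $m'(\cdot,0)=0$, gives $m'=:\Phi(m)\in C^{2+\alpha,1+\alpha/2}$. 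This defines an affine operator $\Phi(m)=\Phi_0 m+w_0$, where $\Phi_0$ is its linear part (data and $h,\tilde h$ set to zero) and $w_0$ collects the inhomogeneities. The decisive observation is that $\Phi_0$ \emph{gains two derivatives}: chaining the two Schauder estimates yields $\|\Phi_0 m\|_{C^{2+\alpha,1+\alpha/2}}\lesssim\|m\|_{C^{\alpha,\alpha/2}}$. Hence $\Phi_0$ factors as a bounded map $C^{\alpha,\alpha/2}(\overline Q)\to C^{2+\alpha,1+\alpha/2}(\overline Q)$ precomposed with the compact embedding $C^{2+\alpha,1+\alpha/2}(\overline Q)\hookrightarrow C^{\alpha,\alpha/2}(\overline Q)$, and is therefore a \emph{compact} operator on $C^{2+\alpha,1+\alpha/2}(\overline Q)$. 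A pair $(u,m)$ solves \eqref{surjective} if and only if $(\mathrm{Id}-\Phi_0)m=w_0$.

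\emph{Solvability via the energy identity.} Since $\mathrm{Id}-\Phi_0$ is a compact perturbation of the identity, the Fredholm alternative reduces existence (for arbitrary data) to injectivity, i.e.\ to uniqueness for the homogeneous system $h=\tilde h=0$, $g=\tilde g=0$. Here I would test the first equation with $m$ and the second with $u$, integrate over $Q$, and subtract the former identity from the latter: the Neumann conditions kill all spatial boundary terms, while $\int_Q\p_t(mu)=\int_\Omega m(\cdot,T)u(\cdot,T)-\int_\Omega m(\cdot,0)u(\cdot,0)=0$ by the vanishing end data. This leaves the identity $\int_Q|\nabla u|^2+\int_Q F^{(1)}m^2=0$, and the sign condition $F^{(1)}>0$ (inherited from the admissibility of $F$, Definition~\ref{Admissible class2}(iii)) forces $\nabla u\equiv0$ and $m\equiv0$; feeding this back into the first equation gives $u\equiv0$. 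Thus $\mathrm{Id}-\Phi_0$ is injective, hence bijective, so the fixed-point equation is solvable, and the regularity $(u,m)\in[C^{2+\alpha,1+\alpha/2}(\overline Q)]^2$ is exactly what the Schauder estimates of the previous step deliver. (The same a priori identity also underlies a method-of-continuity proof in which the coupling is switched on via a parameter $\sigma\in[0,1]$.)

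\emph{Main obstacle.} The genuine difficulty is the forward--backward coupling, which blocks both a naive time-stepping contraction (the backward equation depends on the entire future, so the scheme does not localize in time) and a direct monotone-operator argument in the presence of the second-order coupling $\Delta u$. The mechanism that resolves it is the pairing of parabolic \emph{smoothing} (yielding compactness of $\Phi_0$) with the monotone \emph{duality} identity (yielding injectivity), so that the Fredholm alternative produces a solution with no smallness assumption on $T$; verifying the compatibility hypotheses for the two scalar Schauder solves and the Hölder mapping bounds for $\Phi_0$ are the remaining, essentially routine, technical points.
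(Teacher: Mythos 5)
Your argument is correct in outline, and there is in fact no in-paper proof to compare it against: the paper states Lemma~\ref{linear app unique} without proof and defers to \cite{MFG2}. Your route --- remove the end data by two scalar caloric extensions (legitimate thanks to \eqref{c-systems}), recast \eqref{surjective} as a fixed-point equation $(\mathrm{Id}-\Phi_0)m=w_0$ with $\Phi_0$ compact on $C^{2+\alpha,1+\alpha/2}(\overline Q)$ because the two chained Neumann--Schauder solves gain two derivatives, and then invoke the Fredholm alternative --- is a clean and standard way to handle the forward--backward coupling, and the identity $\int_Q|\nabla u|^2+\int_Q F^{(1)}m^2=0$ you use for injectivity is precisely the Lasry--Lions duality identity that underlies the injectivity of the linearised MFG operator in this line of work (it is the same computation used to prove that the linearisation at $(G,1)$ is injective in the well-posedness analysis). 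So the approach is fully consistent with the paper's toolkit, and the remaining technical points you defer (order-zero compatibility $\partial_\nu g=\partial_\nu\tilde g=0$ sufficing for $C^{2+\alpha,1+\alpha/2}$ Neumann solvability, the algebra property giving $F^{(1)}m\in C^{\alpha,\alpha/2}(\overline Q)$, and compactness of the embedding $C^{2+\alpha,1+\alpha/2}(\overline Q)\hookrightarrow C^{\alpha,\alpha/2}(\overline Q)$) are indeed routine.

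One point deserves emphasis. The lemma as stated assumes only $F^{(1)}\in C^{\alpha}(\Omega)$, whereas your injectivity step uses $F^{(1)}>0$ (or at least $F^{(1)}\ge 0$), imported from Definition~\ref{Admissible class2}(iii). This is not cosmetic: for sufficiently negative $F^{(1)}$ the homogeneous version of \eqref{surjective} can admit nontrivial solutions (expand in Neumann eigenfunctions $\overline{m}_i$ with eigenvalue $\beta_i$; with $F^{(1)}=c$ the resulting $2\times 2$ ODE system has characteristic exponents $\pm\sqrt{\beta_i^{2}+c\beta_i}$, which are purely imaginary when $c<-\beta_i$, and the split conditions $u(\cdot,T)=0$, $m(\cdot,0)=0$ then admit nonzero solutions for suitable $T$), in which case surjectivity fails by the very Fredholm alternative you invoke. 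So the sign hypothesis you bring in is genuinely needed for the conclusion to hold for all $T$; since every $F$ considered in the paper is admissible, this is harmless in context, but you are right to make the dependence on $F^{(1)}>0$ explicit rather than rely on the lemma's literal hypotheses.
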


Now we present the proof of the local well-posedness of the MFG system \eqref{main}, which shall be needed in our subsequent inverse problem study.

\begin{thm}\label{local_wellpose}
	 Suppose that $F\in\mathcal{A}$ . The following results hold:
	\begin{enumerate}
		
		\item[(a)]
		There exist constants $\delta>0$ and $C>0$ such that for any 
		\[
		m_0\in B_{\delta}(C^{2+\alpha}(\Omega) :=\{m_0\in C^{2+\alpha}(\Omega): \|m_0\|_{C^{2+\alpha}(\Omega)}\leq\delta \},
		\]
		the MFG system $\eqref{main}$ has a solution $(u,m)\in
		[C^{2+\alpha,1+\frac{\alpha}{2}}(Q)]^2$ which satisfies
		\begin{equation}\label{eq:nn1}
			\|(u,m)\|_{ C^{2+\alpha,1+\frac{\alpha}{2}}(Q)}:= \|u\|_{C^{2+\alpha,1+\frac{\alpha}{2}}(Q)}+ \|m\|_{C^{2+\alpha,1+\frac{\alpha}{2}}(Q)}\leq C\|m_0\|_{ C^{2+\alpha}(\Omega)}.
		\end{equation}
		Furthermore, the solution $(u,m)$ is unique within the class
		\begin{equation}\label{eq:nn2}
			\{ (u,m)\in  C^{2+\alpha,1+\frac{\alpha}{2}}(Q)\times C^{2+\alpha,1+\frac{\alpha}{2}}(Q): \|(u,m)\|_{ C^{2+\alpha,1+\frac{\alpha}{2}}(Q)}\leq C\delta \}.
		\end{equation}
		%			where
		%			\begin{equation}\label{eq:nn3} 
			%			\|(u,m)\|_{ C^{2+\alpha,1+\frac{\alpha}{2}}(Q)^2}:= \|u\|_{C^{2+\alpha,1+\frac{\alpha}{2}}(Q)}+ \|m\|_{C^{2+\alpha,1+\frac{\alpha}{2}}(Q)}.
			%			\end{equation}	
		
		\item[(b)] Define a function 
		\[
		S: B_{\delta}(C^{2+\alpha}(\Omega)\to C^{2+\alpha,1+\frac{\alpha}{2}}(Q)\times C^{2+\alpha,1+\frac{\alpha}{2}}(Q)\ \mbox{by $S(m_0):=(u,v)$}, 
		\] 
		where $(u,v)$ is the unique solution to the MFG system \eqref{main}.
		Then for any $m_0\in B_{\delta}(C^{2+\alpha}(\Omega))$, $S$ is holomorphic.
	\end{enumerate}
\end{thm}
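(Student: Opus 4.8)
The plan is to obtain (a) and (b) simultaneously from a single application of the analytic (holomorphic) implicit function theorem in Banach spaces, with Lemma~\ref{linear app unique} supplying the invertibility of the linearised operator. Since $F\in\mathcal A$ satisfies $F(x,1)=0$, the constant pair $(u,m)=(G,1)$ solves \eqref{main} with uniform initial datum, and it is this state that we perturb: writing $m=1+\tilde m$, $u=G+\tilde u$, the system \eqref{main} for the deviations $(\tilde u,\tilde m)$ becomes a nonlinear equation vanishing at $(\tilde u,\tilde m)=(0,0)$ when the perturbation of the initial datum is zero. The small-data ball $B_{\delta}(C^{2+\alpha}(\Omega))$ is then understood as small deviations from the uniform state, and the normalisation $\|(u,m)\|\le C\|m_0\|$ in \eqref{eq:nn1} is read off from the behaviour of the solution map near this reference point.

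Concretely, I would introduce the map
\[
\Phi\big(p,(\tilde u,\tilde m)\big):=\Big(-\tilde u_t-\Delta\tilde u+\tfrac12|\nabla\tilde u|^2-F(x,1+\tilde m),\ \tilde m_t-\Delta\tilde m-\di\big((1+\tilde m)\nabla\tilde u\big),\ \tilde u(\cdot,T),\ \tilde m(\cdot,0)-p\Big),
\]
viewed as
\[
\Phi:\ X\times Y\longrightarrow Z,\qquad Y:=\big\{(\tilde u,\tilde m)\in[C^{2+\alpha,1+\frac\alpha2}(Q)]^2:\p_\nu\tilde u=\p_\nu\tilde m=0\ \text{on}\ \Sigma\big\},
\]
with $X:=\{p\in C^{2+\alpha}(\Omega):\p_\nu p=0\}$ the space of admissible initial perturbations and $Z:=[C^{\alpha,\alpha/2}(Q)]^2\times\{g\in C^{2+\alpha}(\Omega):\p_\nu g=0\}^2$ the data space, so that solving \eqref{main} is exactly solving $\Phi(p,(\tilde u,\tilde m))=0$ and $\Phi(0,(0,0))=0$. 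The parabolic H\"older scaling makes each component of $\Phi$ land in the asserted target space, and the compatibility conditions \eqref{c-systems} are encoded in the definitions of $X$ and $Z$. The next step is to verify that $\Phi$ is holomorphic: the quadratic term $|\nabla\tilde u|^2$ and the transport term $\di((1+\tilde m)\nabla\tilde u)$ are polynomial and hence entire, while the Nemytskii term $\tilde m\mapsto F(\cdot,1+\tilde m)$ inherits holomorphy from Definition~\ref{Admissible class2}(i), since $z\mapsto F(x,z)$ is holomorphic with values in $C^{2+\alpha}(\Omega)$ and the power series \eqref{eq:G} converges in the relevant operator norm on $Y$.

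The crux is the partial Fr\'echet derivative $D_{(\tilde u,\tilde m)}\Phi(0,(0,0))$. Linearising at the reference state and using $\nabla\tilde u=0$ and $F'(x,1)=F^{(1)}(x)$, one finds that this derivative sends $(v,w)$ to
\[
\big(-v_t-\Delta v-F^{(1)}w,\ w_t-\Delta w-\Delta v,\ v(\cdot,T),\ w(\cdot,0)\big),
\]
which is precisely the linear operator associated with system \eqref{surjective}. Lemma~\ref{linear app unique} then gives surjectivity of this derivative onto $Z$; the remaining ingredient is injectivity, i.e.\ uniqueness for \eqref{surjective} with zero data, which I would obtain by a standard energy estimate for the coupled forward–backward parabolic system (testing the $m$-equation against $u$ and the $u$-equation against $m$, discarding the Neumann terms, and invoking Gronwall). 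Injectivity together with surjectivity make the derivative a continuous linear bijection between Banach spaces, hence an isomorphism by the open mapping theorem. The analytic implicit function theorem then yields a holomorphic solution map $p\mapsto(\tilde u,\tilde m)$ on a ball of radius $\delta$, unique in a neighbourhood, which after restoring $u=G+\tilde u$, $m=1+\tilde m$ gives both the existence–uniqueness statement (a) and the holomorphy of $S$ in (b); the bound \eqref{eq:nn1} follows since the solution map vanishes at $p=0$ and has bounded derivative. \emph{The main obstacle} I anticipate is upgrading Lemma~\ref{linear app unique} from mere existence to a genuine isomorphism — that is, proving injectivity and boundedness of the inverse for the coupled forward–backward linearised system under the Neumann and compatibility constraints — together with the careful verification of holomorphy of the Nemytskii operator in the Hölder setting.
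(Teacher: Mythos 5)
Your proposal follows essentially the same route as the paper's proof: both recast \eqref{main} as the zero set of a holomorphic map between H\"older spaces built around the uniform reference state, identify the partial derivative at that state with the linear operator of system \eqref{surjective}, obtain surjectivity from Lemma~\ref{linear app unique} and injectivity from the cross-pairing $\int_\Omega (u_t m + u m_t)\,dx$, and conclude existence, uniqueness, the bound \eqref{eq:nn1} and holomorphy of $S$ in one stroke via the analytic implicit function theorem. The one caveat is that the injectivity of the forward--backward linearised system is not closed by Gronwall (which does not apply to such systems) but by a sign condition: the pairing gives $0\le -\int_Q F^{(1)}m^2\,dxdt$, and it is the admissibility hypothesis that $F^{(1)}$ is positive (Definition~\ref{Admissible class2}(iii)) that forces $m=0$ and hence $u=0$.
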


\section{ A-priori estimates and analysis of the linearized systems  }\label{analysis of lin}

\subsection{Higher-order linearization}\label{HLM}

We next develop a high-order linearization scheme of the MFG system \eqref{main} in the probability space around a uniform distribution. For detail, we refer to \cite{MFG2}. We just list the  linearization system here.

Let 
$$m_0(x;\varepsilon)=\frac{1}{|\Omega|}+\sum_{l=1}^{N}\varepsilon_lf_l=1+ \sum_{l=1}^{N}\varepsilon_lf_l,$$
where 
\[
f_l\in C^{2+\alpha}(\mathbb{R}^n)\quad\mbox{and}\quad\int_{\Omega} f_l(x) dx =0,
\]
and $\varepsilon=(\varepsilon_1,\varepsilon_2,...,\varepsilon_N)\in\mathbb{R}^N$ with 
$|\varepsilon|=\sum_{l=1}^{N}|\varepsilon_l|$ small enough.

Let
$$u^{(1)}:=\p_{\varepsilon_1}u|_{\varepsilon=0}=\lim\limits_{\varepsilon\to 0}\frac{u(x,t;\varepsilon)-u(x,t;0) }{\varepsilon_1},$$
$$m^{(1)}:=\p_{\varepsilon_1}m|_{\varepsilon=0}=\lim\limits_{\varepsilon\to 0}\frac{m(x,t;\varepsilon)-m(x,t;0) }{\varepsilon_1}.$$

 Now, we have  that $(u_{j}^{(1)},m_{j}^{(1)} )$ satisfies the following system:
\begin{equation}\label{linear l=1,eg}
	\begin{cases}
		-\p_tu^{(1)}(x,t)-\Delta u^{(1)}(x,t)= F^{(1)}m^{(1)}(x,t)& \text{ in } Q,\medskip\\
		\p_t m^{(1)}(x,t)-\Delta m^{(1)}(x,t)-\Delta  u^{(1)}(x,t)=0&\text{ in } Q,\medskip\\
	   u^{(1)}_j(x,T)=0 & \text{ in } \Omega,\medskip\\
		m^{(1)}_j(x,0)=f_1(x). & \text{ in } \Omega.\\
	\end{cases}  
\end{equation}

Then we can define $$u^{(l)}:=\p_{\varepsilon_l}u|_{\varepsilon=0}=\lim\limits_{\varepsilon\to 0}\frac{u(x,t;\varepsilon)-u(x,t;0) }{\varepsilon_l},$$
$$m^{(l)}:=\p_{\varepsilon_l}m|_{\varepsilon=0}=\lim\limits_{\varepsilon\to 0}\frac{m(x,t;\varepsilon)-m(x,t;0) }{\varepsilon_l},$$
for all $l\in\mathbb{N}$ and obtain a sequence of similar systems.

we consider 
\begin{equation}\label{eq:ht1}
	u^{(1,2)}:=\p_{\varepsilon_1}\p_{\varepsilon_2}u|_{\varepsilon=0},
	m^{(1,2)}:=\p_{\varepsilon_1}\p_{\varepsilon_2}m|_{\varepsilon=0}.
\end{equation}

We have the second-order linearization as follows:
\begin{equation}\label{linear l=1,2 eg}
	\begin{cases}
		-\p_tu^{(1,2)}-\Delta u^{(1,2)}(x,t)+\nabla u^{(1)}\cdot \nabla u^{(2)}\medskip\\
		\hspace*{3cm}= F^{(1)}m^{(1,2)}+F^{(2)}(x)m^{(1)}m^{(2)},& \text{ in } \Omega\times(0,T),\medskip\\
		\p_t m^{(1,2)}-\Delta m^{(1,2)}-\Delta u^{(1,2)}= {\rm div} (m^{(1)}\nabla u^{(2)})+{\rm div}(m^{(2)}\nabla u^{(1)}) ,&\text{ in } \Omega\times (0,T),\medskip\\
	    u^{(1,2)}(x,T)=0 & \text{ in } \Omega,\medskip\\
		m^{(1,2)}(x,0)=0, & \text{ in } \Omega.\\
	\end{cases}  	
\end{equation}
Notice that the non-linear terms of the system $\eqref{linear l=1,2 eg}$ depend on the first-order linearised system $\eqref{linear l=1,eg}$. Since we shall make use of the mathematical induction to recover the high-order Taylor coefficients of $F$. This shall be an important ingredient in our proof of Theorem~\ref{der F} in what follows. 

Similarly, for $N\in\mathbb{N}$, we consider 
\begin{equation*}
	u^{(1,2...,N)}=\p_{\varepsilon_1}\p_{\varepsilon_2}...\p_{\varepsilon_N}u|_{\varepsilon=0},
\end{equation*}
\begin{equation*}
	m^{(1,2...,N)}=\p_{\varepsilon_1}\p_{\varepsilon_2}...\p_{\varepsilon_N}m|_{\varepsilon=0}.
\end{equation*}

\subsection{Construction of probing modes}
\begin{thm}\label{con_of_pb}
	Consider the system
	\begin{equation}\label{F is constant}
		\begin{cases}
			-u_t-\Delta u= Fm & \text{ in } Q,\medskip\\
			m_t-\Delta m-\Delta u=0  & \text{ in } Q,\medskip\\
			\p_{\nu}m(x,t)=\p_{\nu}u(x,t)=0  & \text{ on } \Sigma,\medskip\\
			u(x,T)=0& \text{ on } \Omega.\\
		\end{cases}
	\end{equation}
Suppose $F(x)=c$ is constant, then there exist a sequence of $\lambda_i\geq 0$ and $D_i\in\mathbb{R}$such that 
$$(u,m)=(u, -\lambda_ie^{-\lambda_i t}\overline{m}_i(x)+D_i e^{\lambda_i t}\overline{m}_i(x))$$
are solutions of system $\eqref{F is constant}$, where $\overline{m}_i(x)$ are normalized Neumann eigen-functions of of $-\Delta$ in $\Omega$. 
\end{thm}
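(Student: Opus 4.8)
The plan is to construct these modes by separation of variables, exploiting the fact that the only nonlinearity in \eqref{F is constant} has been frozen by taking $F=c$ constant, so the system is linear with constant coefficients in $t$. I would fix a normalized Neumann eigenfunction $\overline{m}_i$ of $-\Delta$ on $\Omega$, satisfying
$$-\Delta \overline{m}_i=\mu_i\,\overline{m}_i\ \text{ in }\Omega,\qquad \p_{\nu}\overline{m}_i=0\ \text{ on }\p\Omega,$$
with $0=\mu_0\le\mu_1\le\cdots\to\infty$ the Neumann eigenvalues, and seek a solution of the form $m(x,t)=g(t)\overline{m}_i(x)$, $u(x,t)=f(t)\overline{m}_i(x)$. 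With this ansatz the lateral conditions $\p_{\nu}u=\p_{\nu}m=0$ on $\Sigma$ hold automatically, and the whole problem collapses to a two-dimensional linear ODE system in $t$ together with the single terminal condition $f(T)=0$.

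Next I would substitute the ansatz into the two evolution equations. Using $\Delta\overline{m}_i=-\mu_i\overline{m}_i$ and $F=c$, the Hamilton--Jacobi equation $-u_t-\Delta u=Fm$ reduces to $-f'+\mu_i f=c\,g$, while the Kolmogorov equation $m_t-\Delta m-\Delta u=0$ reduces to $g'+\mu_i g+\mu_i f=0$. For $\mu_i>0$ I would solve the second relation for $f=-g'/\mu_i-g$ and insert it into the first, which yields the single second-order constant-coefficient ODE
$$g''=\mu_i(\mu_i+c)\,g.$$
Setting $\lambda_i:=\sqrt{\mu_i(\mu_i+c)}$, the general solution is $g(t)=Ae^{\lambda_i t}+Be^{-\lambda_i t}$, and $f$ is then completely determined by $g$. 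Choosing the normalization $B=-\lambda_i$ and writing $A=D_i$ reproduces precisely the stated form $m=-\lambda_i e^{-\lambda_i t}\overline{m}_i+D_i e^{\lambda_i t}\overline{m}_i$; since imposing $f(T)=0$ is a single scalar relation between $A$ and $B$, the family of separated solutions is one-dimensional, and this normalization simply selects a representative while the terminal condition then pins down $D_i$ explicitly (a short computation gives $D_i=\lambda_i\frac{\mu_i-\lambda_i}{\mu_i+\lambda_i}e^{-2\lambda_i T}$). Letting $i$ range over the spectrum produces the desired sequences $\{\lambda_i\}$ and $\{D_i\}$.

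The main point to watch — and essentially the only place the hypotheses genuinely enter — is ensuring that $\lambda_i$ is real and nonnegative, i.e. that $\mu_i(\mu_i+c)\ge 0$. Because every Neumann eigenvalue satisfies $\mu_i\ge 0$, this reduces to $\mu_i+c\ge 0$; in the intended application $c$ plays the role of $F^{(1)}$, which is a positive real number by the admissibility condition in Definition~\ref{Admissible class2}(iii), so $\lambda_i\ge 0$ for every $i$ and the modes are genuine (non-oscillatory) real solutions. The degenerate eigenvalue $\mu_0=0$ must be treated separately, since there the passage to the second-order ODE by dividing by $\mu_i$ breaks down: here $\lambda_0=0$, the first-order equation forces $g$ to be constant, and one checks directly that $m=D_0\overline{m}_0$ with $u=cD_0(T-t)\overline{m}_0$ solves the system and satisfies $f(T)=0$, consistent with the claimed form at $\lambda_0=0$. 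Apart from this boundary case, the remainder is routine linear-ODE bookkeeping, and the completeness of the Neumann eigenbasis guarantees that these separated modes form a sufficiently rich family to be used as probing modes in the subsequent inverse-problem analysis.
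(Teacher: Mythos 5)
Your proof is correct and follows essentially the same route as the paper: separation of variables with the Neumann eigenfunctions $\overline{m}_i$, the rate $\lambda_i=\sqrt{\mu_i(\mu_i+c)}$, and a superposition of the growing and decaying exponential modes fixed by the terminal condition $u(\cdot,T)=0$; the paper merely verifies the two exponential solutions by direct substitution instead of reducing to the second-order ODE $g''=\mu_i(\mu_i+c)g$. Your explicit value $D_i=\lambda_i\frac{\mu_i-\lambda_i}{\mu_i+\lambda_i}e^{-2\lambda_i T}$ is in fact the correct one (the paper's displayed $D_i=c/(k_i(c+k_i))$ is inverted and omits the factor $e^{-2\lambda_i T}$, though both are negative, which is all that is used downstream), and your separate treatment of $\mu_0=0$ and of the sign condition $\mu_i+c\ge 0$ makes explicit two points the paper leaves implicit.
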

\begin{proof}
	Let $\beta_i$ be a positive Neumann eigen-value of of $-\Delta$ in $\Omega$ and $\overline{m}_i(x)$ is the corresponding normalized eigen-function.
	In other words, we have $\overline{m}_i(x)$ are not $0$ functions and
	\begin{equation}
		\begin{cases}
			-\Delta \overline{m}_i(x)=\beta_i\overline{m}_i(x)   & \text{ in }\Omega\\
			\p_{\nu}\overline{m}_i(x)=0 & \text{ in }\Sigma. 
		\end{cases}
	\end{equation} 
	
	Now we choose $\lambda_i=\sqrt{\beta_i^2+c\beta_i}$ and $k_i=\beta_i-\lambda_i\leq 0$, then we can check  $$(u,m)=(\frac{c+k_i}{\lambda_i}e^{-\lambda_it}\overline{m}_i(x), e^{-\lambda_i t}\overline{m}_i(x))$$ are solutions of the following system.
		\begin{equation}\label{F is constant'}
		\begin{cases}
			-u_t-\Delta u= F(x)m & \text{ in } Q,\medskip\\
			m_t-\Delta m-\Delta u=0  & \text{ in } Q,\medskip\\
			\p_{\nu}m(x,t)=\p_{\nu}u(x,t)=0  & \text{ on } \Sigma,\medskip\\
		\end{cases}
	\end{equation}
	Notice that
	\begin{equation*}
	\begin{aligned}
		-u_t-\Delta u&=(c+k_i)e^{-\lambda_it}\overline{m}(x)-\frac{c+k_i}{\lambda_i}e^{-\lambda_it}\Delta\overline{m}(x)\\
		&=e^{-\lambda_it}\overline{m}(x)[ c+k_i+\frac{(c+k_i)\beta_i}{\lambda_i}]\\
		&=cm(x,t)+ e^{-\lambda_it}\overline{m}(x)[k_i+\frac{(c+k_i)\beta_i}{\lambda_i}],
	\end{aligned} 
\end{equation*}
and
	\begin{equation*}
		\begin{aligned}
			m_t-\Delta m-\Delta u&=-\lambda_i e^{-\lambda_it}\overline{m}(x)-e^{-\lambda_it}\Delta\overline{m}(x)-\frac{c+k_i}{\lambda_i}e^{-\lambda_it}\Delta\overline{m}(x)\\
			&=-e^{-\lambda_it}\overline{m}(x)[\lambda_i-(\lambda_i+k_i)-\frac{c+k_i}{\lambda_i}\beta_i]\\
			&= e^{-\lambda_it}\overline{m}(x)[k_i+ \frac{(c+k_i)\beta_i}{\lambda_i}  ].             
		\end{aligned}
	\end{equation*}
	Hence, we only need to check $k_i+ \frac{(c+k_i)\beta_i}{\lambda_i}=0$, and it can be shown by the choice of $\lambda_i$ and $k_i$.

	Similarly, we have 
	$$(u,m)=(\frac{c}{\beta_i-\lambda_i}e^{\lambda_it}\overline{m}_i(x), e^{\lambda_i t}\overline{m}_i(x))$$
	are solutions of $\eqref{F is constant'}.$
	
	Then we may choose $D_i=\dfrac{c}{k_i(c+k_i)}\leq 0$ (Since $k\leq0$ and $c+k_i\geq 0$). By the linearity of this system, we have there is a solution of $\eqref{F is constant}$ in the form
	
	$$(u,m)=(u, -\lambda_ie^{-\lambda_i t}\overline{m}_i(x)+D_i e^{\lambda_i t}\overline{m}_i(x)).$$
\end{proof}

\begin{thm}\label{con_of_pb2}
		Consider the system
	\begin{equation}\label{F is constant2}
		\begin{cases}
			u_t-\Delta u= Fm & \text{ in } Q,\medskip\\
			-m_t-\Delta m-\Delta u=0  & \text{ in } Q,\medskip\\
			\p_{\nu}m(x,t)=\p_{\nu}u(x,t)=0  & \text{ on } \Sigma,\medskip\\
		\end{cases}
	\end{equation}
	Suppose $F(x)=c$ is constant, then there exist a sequence of $\lambda_i\geq 0$ and $D_i\in\mathbb{R}$ such that 
	$$(u,m)=(u, -\lambda_ie^{-\lambda_i(T-t)}\overline{m}_i(x)+D_i e^{\lambda_i (T-t)}\overline{m}_i(x))$$
	$$(u,m)=(u,e^{-\lambda_i t}\overline{m}_i(x))$$
	are solutions of system $\eqref{F is constant}$, where $\overline{m}_i(x)$ are normalized Neumann eigen-functions of of $-\Delta$ in $\Omega$. 
\end{thm}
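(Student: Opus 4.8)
The plan is to exploit the fact that \eqref{F is constant2} is precisely the time-reversal of the system \eqref{F is constant'} already treated in Theorem~\ref{con_of_pb}, so that the very same separation-of-variables construction applies. Concretely, I would first record the substitution $t\mapsto T-t$: setting $\hat u(x,t):=u(x,T-t)$ and $\hat m(x,t):=m(x,T-t)$ turns $\p_t u\mapsto -\p_t\hat u$ and $\p_t m\mapsto -\p_t\hat m$, so that the first equation of \eqref{F is constant2} becomes $-\p_t\hat u-\Delta\hat u=c\hat m$ and the second becomes $\p_t\hat m-\Delta\hat m-\Delta\hat u=0$, i.e. the pair $(\hat u,\hat m)$ solves exactly \eqref{F is constant'}. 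The homogeneous Neumann conditions are invariant under this substitution. Hence every elementary solution produced in Theorem~\ref{con_of_pb} yields, after replacing $t$ by $T-t$, a solution of \eqref{F is constant2}; this accounts for the factors $e^{\pm\lambda_i(T-t)}$ in the first asserted family.

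Alternatively, and this is the route I would actually write out since it is self-contained, I would verify the separable ansatz directly. Let $\beta_i$ be a (positive) Neumann eigenvalue of $-\Delta$ on $\Omega$ with normalized eigenfunction $\overline{m}_i$, so $-\Delta\overline{m}_i=\beta_i\overline{m}_i$ and $\p_{\nu}\overline{m}_i=0$. Substituting $m(x,t)=e^{\mu t}\overline{m}_i(x)$ and $u(x,t)=a\,e^{\mu t}\overline{m}_i(x)$ into \eqref{F is constant2}, the first equation forces $a(\mu+\beta_i)=c$ and the second forces $\mu=\beta_i(1+a)$; eliminating $a$ gives the characteristic relation $\mu^2=\beta_i^2+c\beta_i$, whence $\mu=\pm\lambda_i$ with $\lambda_i:=\sqrt{\beta_i^2+c\beta_i}\ge 0$ and amplitude $a=c/(\mu+\beta_i)$. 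This produces the two elementary solutions $\big(\tfrac{c}{\beta_i-\lambda_i}e^{-\lambda_it}\overline{m}_i,\ e^{-\lambda_it}\overline{m}_i\big)$ and $\big(\tfrac{c}{\beta_i+\lambda_i}e^{\lambda_it}\overline{m}_i,\ e^{\lambda_it}\overline{m}_i\big)$, the first of which already delivers the second asserted family $(u,\,e^{-\lambda_it}\overline{m}_i)$.

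Finally, I would invoke the linearity of \eqref{F is constant2} to superpose these elementary modes, choosing the weights $-\lambda_i$ and $D_i$ exactly as in the proof of Theorem~\ref{con_of_pb}, namely $k_i=\beta_i-\lambda_i$ and $D_i=c/\big(k_i(c+k_i)\big)$, with the sign bookkeeping $k_i\le 0$ and $c+k_i\ge 0$ giving $D_i\le 0$. Transferring the exponentials through the time-reversal $t\mapsto T-t$ then yields the combined family $\big(u,\,-\lambda_ie^{-\lambda_i(T-t)}\overline{m}_i+D_ie^{\lambda_i(T-t)}\overline{m}_i\big)$, as claimed.

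I expect no genuine analytic obstacle; the construction is elementary once the separation of variables is in place. The only points demanding care are the sign bookkeeping of the time-reversal (ensuring each $\p_t$ flips and that the exponentials end with argument $T-t$ rather than $t$), and the admissibility of the denominators: one must restrict to strictly positive eigenvalues $\beta_i>0$ and use the positivity of $c=F^{(1)}$ so that $\lambda_i$ is real and $k_i=\beta_i-\lambda_i\neq 0$, exactly as in the previous theorem. I would also reconcile the two displayed families in the statement, since they are listed together but correspond respectively to a superposed mode and a single elementary mode.
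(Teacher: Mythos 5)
Your proposal is correct and follows essentially the same route as the paper: the paper's own proof consists precisely of the observation that the first family is obtained from Theorem~\ref{con_of_pb} by the change of variable $t\mapsto T-t$, and that the second family $(u,e^{-\lambda_i t}\overline{m}_i)$ is admissible because \eqref{F is constant2} carries no initial or terminal condition. Your additional direct separation-of-variables verification (the relations $a(\mu+\beta_i)=c$, $\mu=\beta_i(1+a)$, hence $\mu^2=\beta_i^2+c\beta_i$) merely makes explicit what the paper leaves to the reader, and your cautions about $\beta_i>0$ and $\beta_i-\lambda_i\neq 0$ are consistent with the choices $k_i=\beta_i-\lambda_i$ and $D_i=c/(k_i(c+k_i))$ made there.
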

\begin{proof}
	The first pare  is obtained by Theorem $\ref{con_of_pb}$ by a simply changing variable. The second part is true because there is no initial boundary condition in $\eqref{con_of_pb2}.$
\end{proof}

\section{Proof of Theorem ~\ref{der F}}

Before the main proof, we present a key observation as a lemma first. 
\begin{lem}\label{key}
	Let $(v,\rho)$ be a solution of the following system:
	\begin{equation}
		\begin{cases}
			v_t-\Delta v= F_1(x)\rho & \text{ in } Q,\medskip\\
			-\rho_t-\Delta \rho-\Delta v=0  & \text{ in } Q, \medskip\\
		   \p_{\nu}v(x,t)=\p_{\nu}\rho(x,t)=0      & \text{ on } \Sigma, \medskip\\
		\end{cases}
	\end{equation}
	Let $(\overline{u},\overline{m})$ satisfy
	\begin{equation}
		\begin{cases}
			-\overline{u}_t-\Delta \overline{u}- F_1(x)\overline{m}=(F_1-F_2)m_2  & \text{ in } Q, \medskip\\
			\overline{m}_t-\Delta \overline{m}-\Delta \overline{u}=0  & \text{ in } Q, \medskip\\
			\p_{\nu}\overline{u}(x,t)=	\p_{\nu}\overline{m}(x,t)=0    & \text{ on } \Sigma, \medskip\\
			\overline{u}(x,T)=0	   &\text{ in } \Omega, \medskip\\
			\overline{u}(x,0)=0    & \text{ in } \Omega, \medskip\\
			\overline{m}(x,0)=\overline{m}(x,T)=0& \text{ in } \Omega\\
		\end{cases}
	\end{equation}
	Then we have 
	\begin{equation}\label{implies F_1=F_2}
		\int_Q \quad (F_1-F_2)m_2\rho \ dxdt=0.
	\end{equation}
\end{lem}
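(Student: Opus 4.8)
The plan is to establish \eqref{implies F_1=F_2} as an integral (duality) identity, obtained by testing the inhomogeneous system for $(\overline u,\overline m)$ against the homogeneous system for $(v,\rho)$ and integrating by parts. Concretely, I would multiply the first equation of the $(\overline u,\overline m)$ system by $\rho$ and the second by $v$, add them, and integrate over $Q$. Because the source $(F_1-F_2)m_2$ appears only in the first equation and is paired with $\rho$, the right-hand side of the resulting identity is exactly $\int_Q (F_1-F_2)m_2\,\rho\,dxdt$, so it remains to show that the left-hand side vanishes.

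First I would transfer all derivatives off $(\overline u,\overline m)$ and onto $(v,\rho)$ by integrating by parts in both the spatial and temporal variables. Each spatial boundary integral over $\Sigma$ carries a factor $\p_{\nu}\overline u$, $\p_{\nu}\overline m$, $\p_{\nu}v$ or $\p_{\nu}\rho$, all of which vanish by the homogeneous Neumann conditions imposed on the four functions; hence every spatial boundary term drops out. The temporal boundary integrals, evaluated at $t=0$ and $t=T$, are the delicate part: they are of the form $\int_\Omega \overline u\,\rho\,dx$ and $\int_\Omega \overline m\,v\,dx$ at the two endpoints. Here the over-determined endpoint data is used in full: $\overline u(\cdot,0)=\overline u(\cdot,T)=0$ annihilates the terms pairing $\overline u$ with $\rho$, while $\overline m(\cdot,0)=\overline m(\cdot,T)=0$ annihilates the terms pairing $\overline m$ with $v$. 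This is exactly where the hypothesis that both the initial and terminal traces of $(\overline u,\overline m)$ vanish — inherited from the matched measurement data — is essential.

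After discarding all boundary contributions, the left-hand side reduces to the interior pairing of $(\overline u,\overline m)$ against the differential operator applied to $(v,\rho)$. The system satisfied by $(v,\rho)$ is arranged to be the adjoint of the linearized operator governing $(\overline u,\overline m)$: the time direction is reversed ($+\p_t$ on $v$, $-\p_t$ on $\rho$) and the Laplacian/coupling terms are placed so that the formal transpose matches. Consequently the surviving bulk terms should collapse against the relations $v_t-\Delta v=F_1\rho$ and $-\rho_t-\Delta\rho-\Delta v=0$, leaving $0$ on the left and $\int_Q (F_1-F_2)m_2\,\rho\,dxdt$ on the right, which is \eqref{implies F_1=F_2}.

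The step I expect to be the main obstacle is the simultaneous bookkeeping of the temporal boundary terms and the interior cancellation: one must verify that the adjoint pairing is genuinely exact, so that no stray bulk term is left over, while confirming that each of the four endpoint conditions on $(\overline u,\overline m)$ is truly needed to kill its corresponding time-boundary contribution (a sign mismatch in the coupling between the two systems would leave an uncancelled interior term, so the exactness of the transpose must be checked with care). A secondary technical point is justifying the integrations by parts at the stated regularity; I would first prove the identity for sufficiently smooth $(v,\rho)$ and $(\overline u,\overline m)$ and then extend it by the approximation/solvability results already available, in particular Lemma \ref{linear app unique}.
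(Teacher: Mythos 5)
Your overall strategy (a duality identity obtained by integration by parts, with the Neumann conditions killing the spatial boundary terms and the four vanishing endpoint traces killing the temporal ones) is the right one, and those ingredients do match the paper's proof. But the central cancellation you rely on does not happen: the $(v,\rho)$ system of the lemma is \emph{not} the formal adjoint of the $(\overline u,\overline m)$ operator under the pairing you propose. If you multiply the first equation by $\rho$ and the second by $v$, add, and transfer all derivatives onto $(v,\rho)$, the surviving bulk term is
\begin{equation*}
\int_Q \Big[\overline u\,(\rho_t-\Delta\rho-\Delta v)+\overline m\,(-v_t-\Delta v-F_1\rho)\Big]\,dxdt ,
\end{equation*}
and substituting the lemma's relations $\Delta\rho+\Delta v=-\rho_t$ and $v_t-\Delta v=F_1\rho$ turns this into $2\int_Q(\overline u\,\rho_t-\overline m\,v_t)\,dxdt$, which is not annihilated by substitution alone. (The genuine formal adjoint under your pairing would be $\rho_t-\Delta\rho-\Delta v=0$ together with $-v_t-\Delta v=F_1\rho$; the lemma's system is the time reversal of that, so \emph{both} time-derivative signs are mismatched.) This is precisely the ``sign mismatch leaving an uncancelled interior term'' that you flagged as the main obstacle; it does occur, and your argument as written stops one substantive step short of the conclusion.

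The leftover bulk does vanish, but seeing this requires two further identities that your two-pairing scheme never produces: one must test the second $(\overline u,\overline m)$ equation \emph{separately} against $\rho$ and against $v$. Using the endpoint conditions on $\overline m$ and the Neumann conditions, these yield
\begin{equation*}
\int_Q(\overline m\,\Delta v-\rho\,\Delta\overline u)\,dxdt=0
\qquad\text{and}\qquad
\int_Q(2\,\overline m\,\Delta v+F_1\rho\,\overline m+\overline u\,\Delta v)\,dxdt=0 .
\end{equation*}
This is exactly the paper's route: it first derives these two relations, then pairs the $\overline u$-equation with $\rho$ to get $\int_Q(F_1-F_2)m_2\rho\,dxdt=\int_Q(-2\rho\,\Delta\overline u-\overline u\,\Delta v-F_1\rho\,\overline m)\,dxdt$, and the two displayed identities force the right-hand side to be zero. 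The missing idea, concretely, is that the Fokker--Planck-type equation for $\overline m$ must be used twice, against both members of the dual pair, rather than once. Your final remark about first proving the identity for smooth solutions and extending by approximation is sound and consistent with the paper's treatment.
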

\begin{proof}
	Notice that
	\begin{equation}\label{IBP1}
		\begin{aligned}
			0&=\int_Q (\overline{m}_t-\Delta \overline{m}-\Delta \overline{u} )\rho\ dxdt\\
			&=\int_{\Omega}\overline{m}\rho\Big|_0^T dx-\int_Q\overline{m}\rho_t dxdt-\int_Q\rho(\Delta\overline{m}+\Delta\overline{u})\ dxdt\\
			&=-\int_Q\overline{m}( -\Delta\rho-\Delta v)dxdt-\int_Q\rho( \Delta\overline{m}+\Delta\overline{u})\ dxdt\\
			&=\int_Q (\overline{m}\Delta v-\rho\Delta\overline{u})\ dxdt.
		\end{aligned}
	\end{equation}
	
	Similarly, one can deduce that
	\begin{equation}\label{IBP2}
		\begin{aligned}
			0&=\int_Q (\overline{m}_t-\Delta \overline{m}-\Delta \overline{u} )v\ dxdt\\
			&=-\int_Q\overline{m}(\Delta v+F_1\rho)dxdt-\int_Q v(\Delta\overline{m}+\Delta\overline{u} )\ dxdt\\
			&=-\int_Q(2\overline{m}\Delta v+F_1\rho\overline{m}+\overline{u} \Delta v)\ dxdt.
		\end{aligned}
	\end{equation}
	Then we have 
	\begin{align*}
		\int_Q (F_1-F_2)m_2\rho dxdt&=\int_Q\quad \rho(-\overline{u}_t-\Delta\overline{u}-F_1\overline{m})\ dxdt\\
		&=\int_{\Omega}\overline{u}\rho\Big|_0^T dx+\int_Q\quad (\rho_t\overline{u}-\rho\Delta\overline{u}-F_1\rho\overline{m})\  dxdt\\
		&=\int_{\Omega}  \overline{u}(x,T)\rho(x,T) dx+ \int_Q\quad (-\Delta\rho-\Delta v)\overline{u}-\rho\Delta\overline{u}-F_1\rho\overline{m}\ dxdt\\
		&= \int_Q\quad (-2\rho\Delta\overline{u}-\overline{u}\Delta v-F_1\rho\overline{m})\ dxdt\\
		&=\int_Q\quad (-2\rho\Delta\overline{u}-\overline{u}\Delta v-F_1\rho\overline{m})\ dxdt, 
	\end{align*}
	which in combination with $\eqref{IBP1}$ and $\eqref{IBP2}$ readily yields that 
	$$	\int_Q \quad (F_1-F_2)m_2\rho \ dxdt=0.$$
	
	The proof is complete. 
\end{proof}

%Now we move to the proof of Theorem $\ref{der F}$

With all the preparations, we are in a position to present the proof of Theorem~\ref{der F}. 

\begin{proof}[ Proof of Theorem $\ref{der F}$ ]
	For $j=1,2$, let us consider 
	\begin{equation}\label{MFG 1,2}
		\begin{cases}
			-u_t-\Delta u+\frac{1}{2}|\nabla  u|^2= F_j(x,m) & \text{ in } Q,\medskip\\
			m_t-\Delta m-\div (m\nabla u)=0  & \text{ in } Q, \medskip\\
			\p_{\nu}u(x,t)=\p_{\nu}m(x,t)=0     & \text{ on } \Sigma, \medskip\\
			u(x,T)=G    & \text{ in } \Omega,\medskip\\
			m(x,0)=m_0(x) & \text{ in } \Omega.\\
		\end{cases}
	\end{equation}
	Next, we divide our proof into three steps. 
	
	\bigskip
	\noindent {\bf Step I.}~First, we do the first order linearization to the MFG system \eqref{MFG 1,2} in $Q$ and can derive: 
	\begin{equation}\label{linearization}
		\begin{cases}
			-\p_{t}u^{(1)}_j-\Delta u_j^{(1)}= F_j^{(1)}(x)m_j^{(1)} & \text{ in } Q, \medskip\\
			\p_{t}m^{(1)} _j-\Delta m_j^{(1)} -\Delta u_j^{(1)}=0  & \text{ in } Q, \medskip\\
			u^{(1)}_j(x,T)=0     & \text{ in } \Omega,\medskip\\
			m^{(1)} _j(x,0)=f_1(x) & \text{ in } \Omega.\\
		\end{cases}
	\end{equation}
	Let $\overline{u}^{(1)}=u^{(1)}_1-u^{(1)}_2$ and $ \overline{m}^{(1)}=m^{(1)} _1-m^{(1)} _2. $ Let 
	$(v,\rho)$ be a solution to the following system
	\begin{equation}\label{adjoint}
		\begin{cases}
			v_t-\Delta v= F^{(1)}_1(x)\rho & \text{ in } Q, \medskip\\
			-\rho_t-\Delta \rho-\Delta v=0  & \text{ in } Q, \medskip\\
			\p_{\nu}v(x,t)=\p_{\nu}\rho(x,t)=0      & \text{ on } \Sigma, \medskip\\
			\end{cases}
	\end{equation}
	Since $\mathcal{N}_{F_1}=\mathcal{N}_{F_2}$, 
	by Lemma $\ref{key}$, we have 
	\begin{equation}\label{implies to 0;1}
		\int_Q \quad( F^{(1)}_1-F^{(1)}_2)m^{(1)} _2\rho \ dxdt =0,
	\end{equation}
	for all $ m^{(1)} _2\in C^{1+\frac{\alpha}{2},2+\alpha}(Q)$ with $m^{(1)} _2 $ being a solution 
	to $\eqref{linearization}.$
	By Theorems $\ref{con_of_pb}$ and $\ref{con_of_pb2}$, we may choose 
	$$m_2^{(1)}(x,t)=-\lambda_ie^{-\lambda_i t}\overline{m}_i(x)+D_i e^{\lambda_i t}\overline{m}_i(x),$$
	$$\rho(x,t)= -\lambda_ie^{-\lambda_i (T-t)}\overline{m}_i(x)+D_i e^{\lambda_i (T-t)}\overline{m}_i(x),$$
	where the notations are the same as the notations in the proof of Theorem $\ref{con_of_pb}.$
	
	In fact, we use the $f_1(x)=-\lambda_i\overline{m}_i(x)+D_i\overline{m}_i(x)$ here. Since $\overline{m}_i(x)$ are Neumann eigen-function of $-\Delta$, the condition $\int_{\Omega}f_1(x)dx=0$ is satisfied. Furthermore, we can determine $m_2^{(1)}(x,t)$ by $f_1(x)$ and equation $\eqref{linearization}$ by Lemma $\ref{linear app unique}.$

	Then $\eqref{implies to 0;1}$ implies that
	\begin{equation}
		(F_1-F_2)\int_{0}^{T}(-\lambda_ie^{-\lambda_i t}+D_i e^{\lambda_i t})(-\lambda_ie^{-\lambda_i (T-t)}+D_i e^{\lambda_i (T-t)})dt\int_{\Omega}\overline{m}_i(x)^2dx=0. 
	\end{equation}
	
	Notice that $\int_{\Omega}\overline{m}_i(x)^2dx=1$ and $(-\lambda_ie^{-\lambda_i t}+D_i e^{\lambda_i t})(-\lambda_ie^{-\lambda_i (T-t)}+D_i e^{\lambda_i (T-t)})$ is positive, we have 
	
	$$F_1=F_2.$$
	\medskip
	
	\noindent{\bf Step II.}~We proceed to consider the second linearization to the MFG system $\eqref{MFG 1,2}$ in $Q$ and can obtain for $j=1,2$:
	\begin{equation}
		\begin{cases}
			-\p_tu_j^{(1,2)}-\Delta u_j^{(1,2)}(x,t)+\nabla u_j^{(1)}\cdot \nabla u_j^{(2)}\medskip\\
			\hspace*{3cm}= F^{(1)}m_j^{(1,2)}+F^{(2)}(x)m_j^{(1)}m_j^{(2)} & \text{ in } \Omega\times(0,T),\medskip\\
			\p_t m_j^{(1,2)}-\Delta m_j^{(1,2)}-\Delta u_j^{(1,2)}= {\rm div} (m_j^{(1)}\nabla u_j^{(2)})+{\rm div}(m_j^{(2)}\nabla u_j^{(1)}) ,&\text{ in } \Omega\times (0,T) \medskip\\
		    u_j^{(1,2)}(x,T)=0 & \text{ in } \Omega,\medskip\\
			m_j^{(1,2)}(x,0)=0 & \text{ in } \Omega.\\
		\end{cases}  	
	\end{equation}
	By the proof in Step~I, we have $ (u_1^{(1)},m_1^{(1)})=( u_2^{(1)},m_2^{(1)})$.

	Define $\overline{u}^{(1,2)}=u_1^{(1,2)}-u_2^{(1,2)} $ and $\overline{m}^{(1,2)}=m_1^{(1,2)}-m_2^{(1,2)} $. Since  
	$\mathcal{N}_{F_1}=\mathcal{N}_{F_2}$,  we have
	\begin{equation}
		\begin{cases}
			-\overline{u}^{(1,2)}_t-\Delta \overline{u}^{(1,2)}- F_1\overline{m}^{(1,2)}=(F^{(2)}_1-F^{(2)}_2)m_1^{(1)}m_2^{(1)}   & \text{ in } Q\\
			\overline{m}^{(1,2)}_t-\Delta \overline{m}^{(1,2)}-\Delta \overline{u}^{(1,2)}=0  & \text{ in } Q, \medskip\\
		\p_{\nu}\overline{u}^{(1,2)}(x,t)=	\p_{\nu}\overline{m}^{(1,2)}(x,t)=0    & \text{ on } \Sigma, \medskip\\
			\overline{u}^{(1,2)}(x,T)=0	   &\text{ in } \Omega, \medskip\\
			\overline{u}^{(1,2)}(x,0)=0    & \text{ in } \Omega, \medskip\\
			\overline{m}^{(1,2)}(x,0)=\overline{m}^{(1,2)}(x,T)=0& \text{ in } \Omega.\\
		\end{cases}
	\end{equation}
		Let 
	$(v,\rho)$ be a solution to the following system
	\begin{equation}
		\begin{cases}
			v_t-\Delta v= F^{(1)}_1\rho & \text{ in } Q, \medskip\\
			-\rho_t-\Delta \rho-\Delta v=0  & \text{ in } Q, \medskip\\
			\p_{\nu}v(x,t)=\p_{\nu}\rho(x,t)=0      & \text{ on } \Sigma, \medskip\\
		\end{cases}
	\end{equation}
	By Lemma $\ref{key}$, we have
	\begin{equation}
		\int_Q \quad( F^{(2)}_1-F^{(2)}_2)m^{(1)} _1m_2^{(1)} \rho \ dxdt =0.
	\end{equation}
Now we may choose $\rho=e^{-\lambda_it}\overline{m}_i(x)$ and 
$$m_1^{(1)}(x,t)=m_2^{(1)}(x,t)=-\lambda_je^{-\lambda_j t}\overline{m}_j(x)+D_j e^{\lambda_j t}\overline{m}_j(x),$$
for a fixed  $j\in\mathbb{N}.$
Next, by a similar argument in the proof of Step~I, we can derive that 
\begin{equation}
	\int_{\Omega} \quad( F^{(2)}_1-F^{(2)}_2)\overline{m}_j^2\overline{m}_i(x) \ dxdt =0,
\end{equation}
for all $i\in\mathbb{N}$.

Since $\overline{m}_i(x) $ form a complete $L^2$-basis, we have $( F^{(2)}_1-F^{(2)}_2)\overline{m}_j^2=0.$ Notice that the zero set of Neumann eigen-function must be measure zero, we have

$$F^{(2)}_1(x)=F^{(2)}_2(x).$$
	\bigskip
	
	\noindent{Step~III.}~Finally, by mathematical induction and repeating similar arguments as those in the proofs of Steps I and II, one can show that
	$$F^{(k)}_1-F^{(k)}_2=0 ,$$
	for all $k\in\mathbb{N}$. Hence, $F_1(x,m)=F_2(x,m).$
	
	The proof is complete. 
	
\end{proof}

\section*{Acknowledgements}

The work of was supported by the Hong Kong RGC General Research Funds (projects 11301122, 12301420 and 11300821),  the NSFC/RGC Joint Research Fund (project N\_CityU 101/21), and the France-Hong Kong ANR/RGC Joint Research Grant, A-CityU203/19.

\end{document}